\newcommand{\ZZ}{\mathbb{Z}}
\newcommand{\NN}{\mathbb{N}}
\newcommand{\CC}{\mathbb{C}}
\newcommand{\RR}{\mathbb{R}}
\newcommand{\dprod}[2]{\left\langle #1,#2\right\rangle}
\newcommand{\supp}{\mathop{\mathrm{supp}}}
\newtheorem{theorem}{Theorem}[section]
\newtheorem{remark}[theorem]{Remark}
\numberwithin{equation}{section}
\newtheorem{definition}[theorem]{Definition}
\newtheorem{example}[theorem]{Example}
\title{ On multiplicity of spectrum for Anderson type operators with higher rank perturbations}
\author{Anish Mallick\footnote{e-mail: anish.mallick@icts.res.in, Institute: ICTS Bangalore, India.} ~~\& 
Narayanan P A \footnote{e-mail: panarayanan.pothani@gmail.com, Institute: IMSc Chennai, India.}}
\date{\today}
\begin{document}
\setlength{\belowdisplayskip}{2pt} \setlength{\belowdisplayshortskip}{2pt}
\setlength{\abovedisplayskip}{2pt} \setlength{\abovedisplayshortskip}{2pt}
\maketitle
\begin{abstract}
Here, we focus on Anderson type operators over infinite graphs where the randomness acts through higher rank perturbations.
We show that for special family of graphs, the operator has non-trivial multiplicity for its pure point spectrum. 
We, also, show that for some family of graphs, 
any unitary which fixes the random operator, arising from an automorphism of the graph is identity; but that, for these graphs the spectrum of the random operator has non-trivial multiplicity.
\end{abstract}
\section{Introduction}
The theory of random operators have gained a significant attention over the last few decades.
The Anderson tight binding model is an example of random operator which was developed by P. W. Anderson \cite{PA} to study the transport property of spin waves on doped semi-conductor. 
Many works focus on the spectrum of this operator.
Under different settings, the existence of the pure point and the absolute continuous spectra are proved; see \cite{K2,CL,PH} for a comprehensive review on this topic.
%
There are a few other families of random operators on infinite dimensional Hilbert space, for example, random Schr\"{o}dinger operator, random Landau Hamiltonian and random dimer/polymer model.
Many results from the theory of Anderson tight binding model extend to these models as well. 
On the other hand, some results which are true for Anderson tight binding model may not hold for these models.
One such example is multiplicity of the spectrum. 
This work focuses on Anderson type operator with non-trivial multiplicity.

The multiplicity problem in the case of Anderson tight binding model has been investigated in a few works.
For example, Barry Simon \cite{BS2} (works of other authors include \cite{KM2,JL2}) showed that the spectrum is simple in the region of localization for Anderson tight binding Hamiltonian. 
Jak\v{s}i\'{c}-Last \cite{JL1,JL2} showed that for Anderson type operators where the randomness acts through rank one perturbations, the singular spectrum is always simple.

Models where randomness acts through higher rank operators have been considered, as well.
In the case of higher rank perturbations, at least for general Anderson type operators, the best one can provide are bounds on the multiplicity, and
it does not exclude the possibility that 
in some special cases simplicity may show up.
Some works dealing with cases where randomness acts through higher rank perturbations are  \cite{DE,NNS,SSH} where the authors showed simplicity of pure-point spectrum.
Many of these results are inspired from the heuristics which states that \emph{multiplicity of a Hamiltonian arises from symmetry of the underlying problem}. 
So, for Anderson type operators where none of the symmetries of the underlying space (for example in the case of Anderson tight binding model, 
these symmetries will be translation by the action of $\ZZ^d$) keep the random Hamiltonian invariant, the spectrum (at least, the pure point) should be simple.
Though in \cite{SSH} Sadel and Schulz-Baldes showed that absolute continuous spectrum can have non-trivial multiplicity.
For general Anderson type operators, it is possible (see \cite{AD1,AM1}) to provide  bounds (based on the Green's function) on the multiplicity for the singular spectrum.
In some special cases, this bound may imply simplicity of the singular spectrum,  see \cite{AM3,AD1}.
One of the goals of this work is to show that the above mentioned heuristics does not hold in its strictest sense. 
In Section \ref{sec3}, we provide a family of Anderson type operators for which the multiplicity of pure point spectrum is high,
but the multiplicity does not arise from any symmetry of the underlying space.

We should explain the terms \emph{symmetry} and \emph{heuristics} within the context of this work.
We will be working with Anderson type operators over graphs, so by \emph{symmetry of the underlying space} we mean graph automorphism.
So, on the Hilbert space over the graph, we can use the automorphism to construct unitary operators.
We will show that, there are Anderson type operators over certain graphs, for which the point spectrum has non-trivial multiplicity,  even though
any unitary arising from automorphism which fixes the operator is identity.

Many works involving local eigenvalue statistics for higher rank Anderson type operators, for example \cite{npa,HK1}, showed that the statistics is compound Poisson.
But, that itself does not remove the possibility that the statistics is simple Poisson.
The operator discussed in Section \ref{sec2} shows up as the limiting operator obtained in the work \cite{npa}. 
Hence, the local eigenvalue statistics obtained in \cite{npa} is
a non-trivial compound Poisson.
In a similar fashion, the family of operators from section \ref{sec3} implies that the local eigenvalue statistics defined in the work \cite{HK1}
can be a non-trivial compound Poisson point process (i.e., the support of the L\'{e}vy measure has multiple points in it).

In the section \ref{sec2}, we show that the Anderson operator on canopy tree with higher rank perturbations has non-trivial multiplicity 
depending on the rank of the perturbations and the degree of (any vertex which are away from the boundary of) the graph.
In section \ref{sec3}, we construct a family of Anderson type operators which are ergodic under a group action and which has non-trivial multiplicity.
We, also, classify all the unitaries arising from automorphisms of the graph which fixes the operator.
As a corollary, we show that there are graphs such that the Anderson operator defined has non-trivial multiplicity and the multiplicity does not arise from any automorphism of the graph.

\section{Canopy Tree}\label{sec2}
In this section we will focus on an infinite canopy tree of degree $K+1$.  
Before going into the definition of the graph, let us establish a convention that will be used. 
An undirected graph $\mathcal{H}$ is a pair of sets $(\mathcal{V},\mathcal{E})$ where $\mathcal{V}$ denotes the set of vertices and $\mathcal{E}$ denotes the set of edges.  An edge $e\in\mathcal{E}$ is viewed as a subset of $\mathcal{V}$ with two elements. We will work with graphs which does not have self-loop, so we can view an edge as a set of cardinality two.

The graph under consideration here, will have $K+1$ neighbors for each vertex except for the leaf nodes. For the proof of the theorem \ref{thmCan1} to work we will set $K>2$.

\begin{definition}
A canopy tree $\mathcal{T}$ of degree $K+1$ is given by the pair $(\mathcal{V},\mathcal{E})$, where the vertex set is $\mathcal{V}=\ZZ\times(\NN\cup\{0\})$ and the edge set is 
$$\mathcal{E}=\left\{\left\{(x,n),\left(\left\lfloor\frac{x}{K}\right\rfloor,n+1\right)\right\}:x\in\ZZ,n\in\NN\cup\{0\}\right\}.$$
\end{definition}

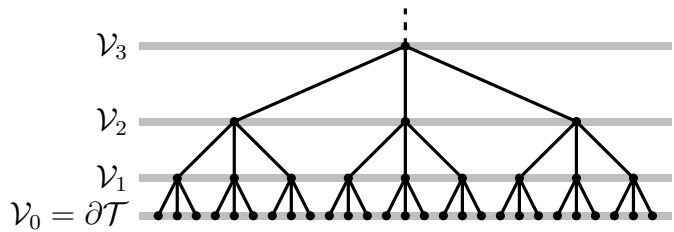
\begin{figure}[ht]
\begin{center}
\begin{tikzpicture}[scale=0.5]
\begin{scope}[rotate=90]
\fill[fill=gray!50](-0.1,-7.0)rectangle(0.1,7.0);
\draw(0,7.0)node[left]{$\mathcal{V}_0=\partial\mathcal{T}$};
\fill[fill=gray!50](0.9,-7.0)rectangle(1.1,7.0);
\draw(1,7.0)node[left]{$\mathcal{V}_1$};
\fill[fill=gray!50](2.4,-7.0)rectangle(2.6,7.0);
\draw(2.5,7.0)node[left]{$\mathcal{V}_2$};
\fill[fill=gray!50](4.4,-7.0)rectangle(4.6,7.0);
\draw(4.5,7.0)node[left]{$\mathcal{V}_3$};

\foreach \x in {-13,...,13}
{
	\filldraw(0,0.5*\x)circle(3pt);
}

\foreach \x in {-4,...,4}
{
	\filldraw(1,1.5*\x)circle(3pt);
    \foreach \y in {1.5*\x-0.5,1.5*\x,1.5*\x+0.5}
    \draw[very thick](1,1.5*\x)--(0,\y);
}

\foreach \x in {-1,...,1}
{
	\filldraw(2.5,4.5*\x)circle(3pt);
    \foreach \y in {4.5*\x-1.5,4.5*\x,4.5*\x+1.5}
    \draw[very thick](2.5,4.5*\x)--(1,\y);
}

\filldraw(4.5,0)circle(3pt);
\draw[very thick](4.5,0)--(2.5,0);
\draw[very thick](4.5,0)--(2.5,4.5);
\draw[very thick](4.5,0)--(2.5,-4.5);

\draw[dashed,very thick](4.5,0)--(5.5,0);
\end{scope}
\end{tikzpicture}
\caption{An example of canopy tree for $K=3$.}
\end{center}
\end{figure}

\noindent We will denote the boundary of the tree by
$$\partial\mathcal{T}=\{(y,0):y\in\ZZ\},$$
and for any $i\in\NN\cup\{0\}$, the set of vertices which are $i$ distance away from the boundary by
$$\mathcal{V}_i=\{(y,i): y\in\ZZ\}.$$
On  $\mathcal{V}$, we denote by $d$ the usual metric of the graph. That is, for any two vertices $ v,w$ in $\mathcal{V} $,   $d(v,w)$ is the length of the shortest path connecting $v$ and $w$.  
We will also need a binary relation $\prec$ on $\mathcal{V}$ which is defined by
$$v\prec w~ \Leftrightarrow ~d(v,\partial\mathcal{T})\leq d(w,\partial\mathcal{T})~\&~d(v,w)=d(w,\partial\mathcal{T})-d(v,\partial\mathcal{T}),$$
where $d(v,\partial{\mathcal{T}}) $ is the distance of $v$ from the boundary. Thus, $v \prec w$ means that $v$ lies in the shortest path between $w$ and the boundary $\partial\mathcal{T}$. 
For $w\in\mathcal{V}$,  the forward neighbor set is defined by
$$N_w=\{v\in\mathcal{V}:v\prec w~\&~d(v,w)=1\}.$$
Note that $N_w$  is empty for $w\in\partial\mathcal{T}$, but for any other vertex it has cardinality $K$.
Finally for $w\in\mathcal{V}$ and $l\in\NN$, we will denote the tree
$$\Lambda_l(w):=\{v\in \mathcal{V}: v\prec w, d(v,w)\leq l\},$$
where the edges are obtained by restricting the edges of $\mathcal{T}$ to $\Lambda_l(w)$.

The random operator of interest is defined on the Hilbert space $\ell^2(\mathcal{T})$.
Denote by $\Delta_{\mathcal{T}}$ to be the adjacency operator on $\ell^2(\mathcal{T})$, defined by
$$(\Delta_{\mathcal{T}} u)(v)=\sum_{d(v,w)=1}u(w)\qquad\forall v\in\mathcal{V},$$
and the projection $P_S$,  for $S\subset \mathcal{V}$, by
$$(P_S u)(v)=\left\{\begin{matrix} u(v) & v\in S\\ 0 & v\not\in S\end{matrix}\right.\qquad \forall v\in\mathcal{V},$$
for any $u\in\ell^2(\mathcal{T})$.
%
%
The family of random operators in consideration is given by
\begin{equation}\label{canOpEq1}
H^\omega_{\mathcal{T}}=\Delta_{\mathcal{T}}+\sum_{x\in \mathcal{N}} \omega_x P_{\Lambda_l(x)},
\end{equation}
for some $l\in\NN$, where 
$$\mathcal{N}=\bigcup_{m\in\NN\cup\{0\}} \mathcal{V}_{m(l+1)+l},$$
and $\{\omega_x\}_{x\in\mathcal{N}}$ are independent identically distributed random variables. 

We will denote $\Delta_n$ to be the adjacency matrix for the tree $\Lambda_{n}(x)$, for $x\in\mathcal{V}_{n}$, for $n\in\NN$
(since all of these trees are isomorphic, we do not need to specify the root other than the distance from boundary). 
\begin{theorem}\label{thmCan1}
For $K>2$, let $\mathcal{T}$ denote the canopy tree of degree $K+1$ and on the Hilbert space $\ell^2(\mathcal{T})$ define the random operator $H^\omega_{\mathcal{T}}$ by \eqref{canOpEq1}, for some $l\geq 2$.
Set the random variables  $\{\omega_x\}_{x\in\mathcal{N}}$ to be independent and identically distributed following a real absolutely continuous distribution $\mu$. 
Then
$$\sigma(\Delta_{l-1})+\supp(\mu)\subset \sigma_{pp}(H^\omega_{\mathcal{T}})$$
and the maximum multiplicity of point spectrum in $\sigma(\Delta_l)+\supp(\mu)$ is at least $K-1$.
\end{theorem}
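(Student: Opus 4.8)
The plan is to exploit the self-similar structure of the canopy tree: the random operator $H^\omega_{\mathcal{T}}$ is block-structured along the slices $\mathcal{V}_i$, and on each finite subtree $\Lambda_l(x)$ with $x\in\mathcal{N}$ the perturbation is a scalar multiple of the identity, so on that subtree $H^\omega_{\mathcal{T}}$ restricted appropriately looks like $\Delta_l + \omega_x I$. First I would observe that, because the forward neighbor set $N_w$ always has cardinality $K$, there is a large subspace of $\ell^2(\Lambda_l(x))$ on which $\Delta_l$ acts like $\Delta_{l-1}$: concretely, for a vertex $x$ at the top of the block, the $K$ children each root an isomorphic copy of $\Lambda_{l-1}$, and for any eigenvector $\psi$ of $\Delta_{l-1}$ with eigenvalue $\lambda$ we can form vectors of the form $\sum_{j} c_j \psi^{(j)}$ supported on the $K$ sub-branches (where $\psi^{(j)}$ is $\psi$ transplanted to the $j$-th branch) with $\sum_j c_j = 0$; the vanishing sum kills the contribution that would otherwise leak to the parent $x$ through the adjacency action, so these vectors are eigenvectors of $\Delta_l$ (hence of $H^\omega_{\mathcal{T}}$ once we add $\omega_x I$ and note they are supported inside $\Lambda_l(x)$ where the potential is constant $\omega_x$). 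This already produces a $(K-1)$-dimensional space of eigenvectors, all with eigenvalue $\lambda + \omega_x$, supported in a single block.

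The second step is to make this honest on the full graph $\ell^2(\mathcal{T})$: I would check that each such vector, extended by zero outside $\Lambda_l(x)$, is a genuine $\ell^2$ eigenvector of $H^\omega_{\mathcal{T}}$. The only place the adjacency operator could fail is at the boundary vertices of $\Lambda_l(x)$ — at the root $x$ (handled by the zero-sum condition, which makes the vector vanish in a neighborhood of $x$ inside the block, or at least makes the outgoing amplitude cancel) and at the leaves of $\Lambda_l(x)$, which are vertices of $\mathcal{T}$ with further neighbors toward the boundary; here one must arrange the transplanted eigenvectors of $\Delta_{l-1}$ to be supported away from those leaves, which is why the hypothesis $l\geq 2$ is needed (there is at least one interior layer to play with) and why we get $\sigma(\Delta_{l-1})$, not $\sigma(\Delta_l)$, for the "guaranteed in $\sigma_{pp}$" part. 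For the multiplicity claim in $\sigma(\Delta_l)+\supp(\mu)$: take $\lambda\in\sigma(\Delta_l)$ and an eigenvector $\varphi$ of $\Delta_l$ on $\Lambda_l(x)$; the $K$ transplanted-with-zero-sum copies of $\varphi$ onto the $K$ sub-branches of a vertex one level higher, combined with absolute continuity of $\mu$ guaranteeing that the value $\lambda + t$ is actually attained (for $t$ in the essential range of $\omega_x$) on a positive-probability set of configurations, give $K-1$ linearly independent eigenvectors at a common energy, so the maximal multiplicity over the relevant energy window is at least $K-1$ almost surely.

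The role of absolute continuity of $\mu$ and independence is to ensure these point-spectrum contributions are not destroyed: for a fixed finite block the eigenvalue $\lambda+\omega_x$ is a genuine eigenvalue for every value of $\omega_x$, and the bound $K>2$ makes $K-1\geq 2$ so the statement is non-trivial. I would also remark that the reason the potential is taken constant ($\omega_x P_{\Lambda_l(x)}$) on whole subtrees, rather than i.i.d.\ at each vertex, is precisely to preserve these localized eigenvectors — with genuinely vertex-wise randomness the Jak\v{s}i\'c--Last type simplicity would kick in.

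The main obstacle I expect is the bookkeeping at the two types of boundary of $\Lambda_l(x)$ — verifying that the zero-sum transplanting trick simultaneously kills the leak toward the parent and that the chosen eigenvectors of $\Delta_{l-1}$ (resp. $\Delta_l$) can be taken to vanish near the far leaves so nothing leaks toward $\partial\mathcal{T}$ — together with checking that the $K-1$ vectors produced are linearly independent and that the count $K-1$ (and not $K$ or $K-2$) is exactly what survives. Everything else is linear algebra on finite trees plus a routine measure-theoretic argument using the absolute continuity of $\mu$.
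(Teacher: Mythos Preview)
Your core idea---transplant an eigenvector $\psi$ of $\Delta_{l-1}$ onto the $K$ sub-branches $\Lambda_{l-1}(y)$, $y\in N_x$, with coefficients $(\alpha_y)$ satisfying $\sum_y\alpha_y=0$, and observe that the resulting vector is an $\ell^2$-eigenvector of $H^\omega_{\mathcal{T}}$ for the eigenvalue $E+\omega_x$---is exactly what the paper does. However, two pieces of your bookkeeping are off.

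\textbf{The leaves of $\Lambda_l(x)$ are already on $\partial\mathcal{T}$.} For $x\in\mathcal{V}_l$ the forward subtree $\Lambda_l(x)$ reaches all the way down to $\mathcal{V}_0=\partial\mathcal{T}$, and boundary vertices of the canopy tree have \emph{no} further neighbours (their only neighbour is the parent, which lies inside $\Lambda_l(x)$). So there is nothing to check at the bottom: the eigenvector equation at a leaf $p\in\partial\mathcal{T}$ reduces to the eigenvector equation for $\Delta_{l-1}$ at the corresponding leaf of $\mathcal{T}_{l-1}$, which $\psi$ already satisfies. You do \emph{not} need $\psi$ to vanish near the leaves, and the hypothesis $l\ge 2$ is not used for that purpose. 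The only place a cancellation is needed is at the root $x$, and that is exactly what $\sum_y\alpha_y=0$ buys: $\sum_{y\in N_x}\Psi^{(\alpha)}(y)=\psi(e)\sum_y\alpha_y=0$, and since $\Psi^{(\alpha)}(x)=0$ the equation also holds at the parent of $x$.

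\textbf{Your route to the $\sigma(\Delta_l)$ multiplicity claim does not work.} Going ``one level higher'' to a vertex $x'\in\mathcal{V}_{l+1}$ and transplanting an eigenvector of $\Delta_l$ onto the $K$ subtrees $\Lambda_l(y)$, $y\in N_{x'}$, fails because those $K$ subtrees carry \emph{different} random potentials $\omega_y$ (each $y\in\mathcal{V}_l\subset\mathcal{N}$ is itself an index in the i.i.d.\ family), so the transplanted vector is almost surely not an eigenvector of $H^\omega_{\mathcal{T}}$. The paper sidesteps this entirely: the construction for $E\in\sigma(\Delta_{l-1})$ already exhibits eigenvalues of multiplicity at least $K-1$, and since the transplanting identity shows $\sigma(\Delta_{l-1})\subset\sigma(\Delta_l)$, the statement about maximal multiplicity in the larger set $\sigma(\Delta_l)+\supp(\mu)$ follows immediately. (The paper's own proof text writes ``$E\in\sigma(\Delta_l)$'' at one point but then works with $\psi$ on $\mathcal{T}_{l-1}$; read it as $E\in\sigma(\Delta_{l-1})$.)
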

\begin{proof}
Let $\mathcal{T}_{l-1}$ denote a tree with root $e$ which is isomorphic to the tree $\Lambda_{l-1}(x)$, for $x\in\mathcal{V}_{l-1}$. 
Using the fact that all the $\Lambda_{l-1}(x)$ are identical for any $x\in\mathcal{V}_{l-1}$, we will denote $\phi_x$ to be the isomorphism 
$$\phi_x:\Lambda_{l-1}(x)\rightarrow \mathcal{T}_{l-1}.$$
We will view $\Delta_{l-1}$ as the adjacency matrix for the graph $\mathcal{T}_{l-1}$. 
Finally, for $E\in\sigma(\Delta_l)$ consider an normalized eigenvector $\psi$ corresponding to the eigenvalue $E$. 
\\\\
\noindent{\bf Claim:} {\it For any $x\in\mathcal{V}_l\subset \mathcal{N}$, observe that $E+\omega_x$ is an eigenvalue of the operator $H^\omega_{\mathcal{T}}$ with multiplicity at least $K-1$.}

\noindent To show this, we are going to define the $K-1$ orthonormal eigenvectors for $E+\omega_x$. 
Let $\alpha:=(\alpha_y), y\in N_x$ be an $K-1~$-tuple in $\mathbb{R}^n$, satisfying the following conditions 
\begin{equation}\label{thm1Eq1}
\sum_y \alpha_y=0~~\&~~\sum_y |\alpha_y|^2=1.
\end{equation}

For each such $\alpha$, define the vector $\Psi^{(\alpha)} \in \ell^2(\mathcal{T})$ by
$$\Psi^{(\alpha)}(p)=\left\{ 
\begin{matrix} \alpha_y \psi(\phi_y(p)), &  \text{if }p\prec y ~ \text{ for some }~ y\in N_x \\
0, & ~\text{if}~ p\not\in \cup_{y\in N_x} \Lambda(y)  
\end{matrix} \right.\qquad \forall p\in\mathcal{T},$$

Observe that $\Psi^{(\alpha)}$ satisfies
$$[(H^\omega_{\mathcal{T}}-(E+\omega_x))\Psi^{(\alpha)}](p)=0\qquad\forall p\in\mathcal{V}\setminus \Lambda_l(x)$$
trivially, because all the entries that show up are defined to be zero. 
For any $p\in\Lambda_{l-1}(y)$ where $y\in N_x$, we have
$$[(H^\omega_{\mathcal{T}}-(E+\omega_x))\Psi^{(\alpha)}](p)=\alpha_y[\Delta_{\mathcal{T}_l}\psi](\phi_y(p))-E\psi(\phi_y(p))=0.$$
Here we are using the fact that $\Psi^{(\alpha)}(x)=0$, hence $[\Delta_{\mathcal{T}}\Psi^{(\alpha)}](p)=[\Delta_{\mathcal{T}_l}\psi](\phi_y(p))$.
Finally, at $x$ we have
\begin{align*}
 &[(H^\omega_{\mathcal{T}}-(E+\omega_x))\Psi^{(\alpha)}](x)\\
 &\qquad=\sum_{y\in N_x} \Psi^{(\alpha)}(y)=\psi(e)\sum_{y\in N_x} \alpha_y=0
\end{align*}
by definition of $(\alpha_y)$. Observe that, for any $(\alpha_y)_y$ and $(\beta_y)_y$ that satisfies \eqref{thm1Eq1}, we have
$$\dprod{\Psi^{(\alpha)}}{\Psi^{(\beta)}}_{\ell^2(\mathcal{T})}=\sum_{y\in N_x}\alpha_y\beta_y.$$
Hence we can have $K-1$ orthonormal vectors $\Psi^{(\alpha)}$ which are eigenvectors for $H^\omega_{\mathcal{T}}$ for the eigenvalue $E+\omega_x$. 

Now using the fact that $\{\omega_x\}_{x\in\mathcal{V}_l}$ are i.i.d, we have 
$$\overline{\{E+\omega_x:x\in\mathcal{V}_l\}}=E+\supp(\mu),$$
which completes the proof of the theorem by using the above claim.

\end{proof}
\begin{remark}
Note that, in the theorem we can remove the hypothesis that $\mu$ is absolutely continuous and still the result will hold. 
The only problem is that the set $\sigma(\Delta_{l-1})+\supp(\mu)$ may not have positive Lebesgue measure.
Following the proof, it is easy to see that the measure $\mu_{\Delta_{l-1}}(\cdot)=\sum_{E\in\sigma(\Delta_{l-1})}\mu(\cdot-E)$ is absolutely continuous with respect to density of state measure:
$$N(f)=\lim_{L\rightarrow\infty}\frac{1}{|\Lambda_L(x)|}tr(f(P_{\Lambda_L(x_L)} H^\omega_{\mathcal{T}}P_{\Lambda_L(x_L)} ))\qquad\forall f\in C_c(\RR),$$
where the sequence $x_L\in\mathcal{V}$ is chosen to satisfy $d(x_L,\partial\mathcal{T})=L$ (the limit is non-random follows from\cite{npa}).
So, the density of state measure has non-trivial singular component if $\mu$ is singular.
\end{remark}
\section{Cayley type graph and $G$-ergodic operators}\label{sec3}
It should be noted that in the proof of the Theorem \ref{thmCan1}, the fact that we are working with tree is not important, 
but that there is an eigenvalue of the adjacency matrix for the tree, which has non-trivial multiplicity and there are eigenvectors which are zero at root.
This observation can be used to create other examples of Anderson type operators where similar result holds.

In this section, we focus on a class of  infinite graphs generated by the help of finitely generated groups which are similar to Cayley graph, and define Anderson type operators.
We will show that, under certain circumstances the operator defined has non-trivial multiplicity for its pure point spectrum.
The infinite graphs that we will be working with are defined as follows:
\begin{definition}\label{defCay}
Given a finitely generated group $G$ with generator $g_1,\ldots,g_n$ and 
a set of vertices $v_1,\ldots,v_n,v_{-1},v_{1}\ldots,v_{-n}\in\mathcal{V}$ from a finite undirected graph $\mathcal{H}=(\mathcal{V},\mathcal{E})$,
define the infinite graph $\mathcal{H}_G=(\mathcal{V}_G,\mathcal{E}_G)$ by
\begin{itemize}
 \item The vertex set is given by 
 $$\mathcal{V}_G:=\{(v,g):v\in\mathcal{V},g\in G\},$$
 \item The edge set $\mathcal{E}_G$ is union of the sets
 $$\{\{(v,g),(w,g)\}: \{v,w\}\in\mathcal{E}, g\in G\},$$
 and 
 $$\{\{(v_{-i},g),(v_{i},gg_i)\}: g\in G, 1\leq i\leq n\}.$$
\end{itemize}
\end{definition}
\begin{figure}[ht]
\begin{center}
\begin{tikzpicture}[scale=0.8]

\foreach \x in {0,...,5}{
  \foreach \y in {0,...,5}{
    \fill[fill=gray!50](\x,\y)circle(0.4)node{\footnotesize $\mathcal{H}$};
  }
}
\foreach \x in {0,...,4} {
 \foreach \y in {0,...,4} {
  \filldraw(\x+0.3,\y)circle(2pt)--(\x+0.7,\y)circle(2pt);
  \filldraw(\x,\y+0.3)circle(2pt)--(\x,\y+0.7)circle(2pt);
 }
}

\foreach \x in {0,...,4} {
 \filldraw(\x+0.3,5)circle(2pt)--(\x+0.7,5)circle(2pt);
}

\foreach \y in {0,...,4}{
 \filldraw(5,\y+0.3)circle(2pt)--(5,\y+0.7)circle(2pt);
}

\foreach \t in {0,...,5}{
\filldraw[dashed](-0.3,\t)circle(2pt)--(-1,\t);
\filldraw[dashed](5.3,\t)circle(2pt)--(6,\t);
\filldraw[dashed](\t,-0.3)circle(2pt)--(\t,-1);
\filldraw[dashed](\t,5.3)circle(2pt)--(\t,6);
}

\fill[fill=gray!50](8.5,1)circle(1.1)node{\small$(g,\mathcal{H})$};
\filldraw(9.5,1)circle(0.15)node[above right]{$v_{-1}$};
\filldraw(7.5,1)circle(0.15)node[above left]{$v_{1}$};
\filldraw(8.5,2)circle(0.15)node[above left]{$v_{-2}$};
\filldraw(8.5,0)circle(0.15)node[below left]{$v_{2}$};

\fill[fill=gray!50](12.5,1)circle(1.1)node{\small$(gg_1,\mathcal{H})$};
\filldraw(13.5,1)circle(0.15)node[above right]{$v_{-1}$};
\filldraw(11.5,1)circle(0.15)node[above left]{$v_{1}$};
\filldraw(12.5,2)circle(0.15)node[above]{$v_{-2}$};
\filldraw(12.5,0)circle(0.15)node[below]{$v_{2}$};

\fill[fill=gray!50](8.5,5)circle(1.1)node{\small$(gg_2,\mathcal{H})$};
\filldraw(9.5,5)circle(0.15)node[right]{$v_{-1}$};
\filldraw(7.5,5)circle(0.15)node[left]{$v_{1}$};
\filldraw(8.5,6)circle(0.15)node[above left]{$v_{-2}$};
\filldraw(8.5,4)circle(0.15)node[below left]{$v_{2}$};

\draw[thick](9.5,1)--(11.5,1);
\draw(10.5,1)node[below]{$g_1$};

\draw[thick](8.5,2)--(8.5,4);
\draw(8.5,3)node[right]{$g_2$};

\end{tikzpicture} 
\caption{An example of a Cayley type graph obtained by $\ZZ^2$ action on some finite graph $\mathcal{H}$.}
\end{center}
\end{figure}
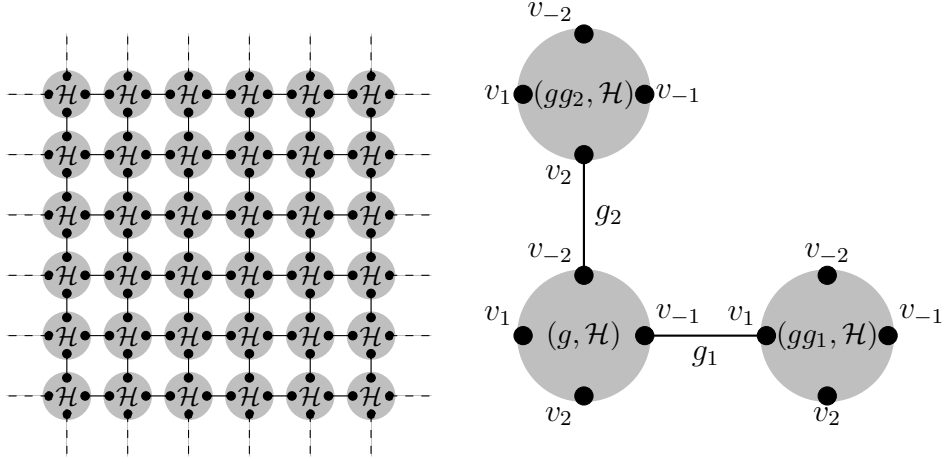
An important fact to note is that the graph $\mathcal{H}_G$ depends on the generator set of $G$. This can easily be demonstrated by focusing on the fact that Cayley graphs for a group may not be isomorphic for different generator sets. So, the graph described above is dependent on $\mathcal{H}$, $G$, $\{v_i\}_{i=-n}^n$ and also $\{g_i\}_{i=1}^n$.

Before moving forward, it should be noted that the vertices $\{v_i\}_{i=-n}^{n}$ in the definition need not be distinct.
So, one can take a tree $\mathcal{T}_l$ with root $e$ (similar to previous section) and set all the $v_i$ to be $e$ and generate the graph $\mathcal{T}_G$.
One should note that the graph $\mathcal{T}_G$, when $G$ is a free group, is not
isomorphic to canopy tree; hence the previous result is not a restriction of this case.

For the graph $\mathcal{H}_G=(\mathcal{V}_G,\mathcal{E}_G)$, we can define the adjacency operator $\Delta_{\mathcal{H}_G}$ on $\ell^2(\mathcal{H}_G)$ by
$$(\Delta_{\mathcal{H}_G}u)((v,h))=\sum_{\{(w,g),(v,h)\}\in \mathcal{E}_G} u((w,g))\qquad\forall (v,h)\in \mathcal{V}_G,$$
and define the projection $P_g$, for $g\in G$, by
$$(P_g u)((v,h))=\left\{\begin{matrix} u((v,g))  & g=h \\ 0 & g\neq h \end{matrix}\right.\qquad \forall (v,h)\in\mathcal{V}_G,$$
for $u\in \ell^2(\mathcal{H}_G)$.
With these definitions in place, we can now define the family of Anderson type operators
\begin{equation}\label{eqErgAndOp}
 H^\omega_G=\Delta_{\mathcal{H}_G}+\sum_{g\in G}\omega_g P_g,
\end{equation}
where $\{\omega_g\}_{g\in G}$ are i.i.d real random variables with common distribution $\mu$. 
If we assume that the support of $\mu$ is bounded, then the operator $H^\omega_G$ is bounded almost surely.

Assuming $\mu$ to be a Borel measure, one can use Kolmogorov construction and
view $\omega_g$ as a random variable over the product probability space $(\RR^{G}, \otimes^G \mathcal{B}_{\RR}, \otimes^G \mu)$ which will be  denoted by $(\Omega,\mathcal{B},\mathbb{P})$. 
For any $g\in G$, define the measure preserving map $\theta_g:\Omega\rightarrow \Omega$ by
$$(\theta_g\omega)_h=\omega_{gh}\qquad\forall h\in G,$$
and the unitary operator $U_g: \ell^2(\mathcal{H}_G)\rightarrow \ell^2(\mathcal{H}_G)$
$$(U_g u)((v,h))= u((v,gh))\qquad\forall (v,h)\in\mathcal{V}_G,$$
and observe that
$$U_g H^\omega_G U_g^\ast =H^{\theta_g(\omega)}_G\qquad\forall g\in G$$
almost surely. Hence the family of random operators $H^\omega_G$ is ergodic under the action of the group $G$.

Before going to the main result of this section, let us first focus on the
unitary maps generated by the automorphisms of the graph $\mathcal{H}_G$.
Since graph automorphisms are bijection of vertex set, an automorphism
$\phi:\mathcal{H}_G\rightarrow\mathcal{H}_G$ produces a unitary map $U_\phi:\ell^2(\mathcal{H}_G)\rightarrow \ell^2(\mathcal{H}_G)$ by
$$(U_\phi u)((v,h))=u(\phi((v,h)))\qquad\forall (v,h)\in\mathcal{V}_G,$$
for $u\in\ell^2(\mathcal{V}_G)$. Since a graph automorphism also provides a bijection of edges, we have
$$\Delta_{\mathcal{H}_G}=U_\phi\Delta_{\mathcal{H}_G} U_\phi^\ast.$$
Let $Aut(\mathcal{H}_G)$ denote the group of all automorphisms of the graph $\mathcal{H}_G$, and let
$$Aut_{And} (\mathcal{H}_G)=\{\phi\in Aut(\mathcal{H}_G): H^\omega_{\mathcal{H}_G}=U_\phi H^\omega_{\mathcal{H}_G} U_\phi^\ast~a.s\}$$
denote the group of automorphisms which fix the operator \eqref{eqErgAndOp}. The next theorem will characterize the group $Aut_{And}(\mathcal{H}_G)$.
But first, let us fix a notation, given an undirected graph $\mathcal{H}$ and a set of vertices $V$, we will denote $Aut(\mathcal{H}|V)$ to the group of automorphisms $\phi:\mathcal{H}\rightarrow\mathcal{H}$ satisfying
$$\phi(v)=v\qquad\forall v\in V.$$
With the above notation in place, we can classify the group $Aut_{And}(\mathcal{H}_G)$.
\begin{theorem}\label{thmAut}
Given a finite graph $\mathcal{H}=(\mathcal{V},\mathcal{E})$ along with
vertices $v_{-n}, \ldots, v_{-1}$, $v_1, \ldots, v_n$ and a finitely generated
group $G$ with generators $g_1,\ldots,g_n$ define the graph $\mathcal{H}_G$ by definition \ref{defCay} and the random operator $H^\omega_{\mathcal{H}_G}$ by \eqref{eqErgAndOp} for i.i.d sequence of real random variables $\{\omega_g\}_{g\in G}$ following a continuous distribution $\mu$. Then the map
$$\Theta: \prod_{g\in G}Aut(\mathcal{H}| \{v_{-n},\ldots,v_{-1},v_1,\ldots,v_n\})\rightarrow Aut_{And}(\mathcal{H}_G)$$
defined by
$$\Theta((\phi_g)_{g\in G})((v,h))=(\phi_h(v),h)\qquad\forall (v,h)\in \mathcal{V}_G,$$
for any $(\phi_g)_{g\in G}\in \prod_{g\in G}Aut(\mathcal{H}| \{v_{-n},\ldots,v_{-1},v_1,\ldots,v_n\})$, is a group isomorphism.
\end{theorem}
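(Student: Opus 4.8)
The plan is to show that $\Theta$ is well-defined, a homomorphism, injective, and surjective, in that order; the surjectivity is where all the work lies. First I would check well-definedness: given $(\phi_g)_{g\in G}$ with each $\phi_g\in Aut(\mathcal{H}\mid\{v_{-n},\ldots,v_n\})$, the map $\Psi=\Theta((\phi_g)_g)$ sending $(v,h)\mapsto(\phi_h(v),h)$ is clearly a bijection of $\mathcal{V}_G$; I must verify it preserves $\mathcal{E}_G$. Edges of the first type $\{(v,g),(w,g)\}$ with $\{v,w\}\in\mathcal{E}$ are preserved because $\phi_g$ is an automorphism of $\mathcal{H}$. Edges of the second type $\{(v_{-i},g),(v_i,gg_i)\}$ are preserved precisely because $\phi_g(v_{-i})=v_{-i}$ and $\phi_{gg_i}(v_i)=v_i$, which is exactly the fixing condition imposed on the $\phi_g$'s. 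Then I check $\Psi\in Aut_{And}$: since $\Psi$ acts within each copy $\{(v,h):v\in\mathcal{V}\}$, the unitary $U_\Psi$ commutes with each $P_g$, so $U_\Psi\left(\sum_g\omega_g P_g\right)U_\Psi^\ast=\sum_g\omega_g P_g$, and $U_\Psi\Delta_{\mathcal{H}_G}U_\Psi^\ast=\Delta_{\mathcal{H}_G}$ since $\Psi$ is a graph automorphism; hence $U_\Psi H^\omega_{\mathcal{H}_G}U_\Psi^\ast=H^\omega_{\mathcal{H}_G}$ surely, not just a.s. The homomorphism property $\Theta((\phi_g)_g)\circ\Theta((\psi_g)_g)=\Theta((\phi_g\circ\psi_g)_g)$ follows by a direct computation on $(v,h)$, using that the second factor acts within copy $h$ before the first does. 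Injectivity is immediate: if $\Theta((\phi_g)_g)=\mathrm{id}$ then $\phi_h(v)=v$ for all $v,h$, so each $\phi_h=\mathrm{id}$.

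The main obstacle is surjectivity: given $\phi\in Aut_{And}(\mathcal{H}_G)$, I must show $\phi$ preserves each copy $\mathcal{V}^{(h)}:=\{(v,h):v\in\mathcal{V}\}$ setwise, i.e. there is a permutation $\sigma$ of $G$ with $\phi(\mathcal{V}^{(h)})=\mathcal{V}^{(\sigma(h))}$, and then show $\sigma=\mathrm{id}$, so that $\phi$ restricts on each copy to an automorphism $\phi_h$ of $\mathcal{H}$ fixing the marked vertices. The key is the condition $U_\phi H^\omega U_\phi^\ast=H^\omega$ a.s., which gives $U_\phi\left(\sum_g\omega_g P_g\right)U_\phi^\ast=\sum_g\omega_g P_g$ a.s. (the $\Delta$ parts already agree). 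Now $\sum_g\omega_g P_g$ is the diagonal operator with value $\omega_h$ on all of $\mathcal{V}^{(h)}$; conjugating by $U_\phi$ permutes these level sets. The continuity of $\mu$ forces the $\omega_h$ to be a.s. distinct, so the diagonal operator $\sum_g\omega_g P_g$ a.s. determines its eigenspaces $\ell^2(\mathcal{V}^{(h)})$, and $U_\phi$ must permute them: $U_\phi\ell^2(\mathcal{V}^{(h)})=\ell^2(\mathcal{V}^{(\sigma_\omega(h))})$ for some permutation $\sigma_\omega$ depending a priori on $\omega$. But $U_\phi$ is fixed, so $\sigma_\omega$ is $\omega$-independent off a null set; call it $\sigma$. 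Thus $\phi(\mathcal{V}^{(h)})=\mathcal{V}^{(\sigma(h))}$. Feeding the identity $\omega_h P_h$ on $\ell^2(\mathcal{V}^{(h)})$ equals $U_\phi^\ast(\omega_{\sigma(h)}P_{\sigma(h)})U_\phi$ restricted there and using that the equality $U_\phi(\sum\omega_g P_g)U_\phi^\ast=\sum\omega_g P_g$ holds for a.e. $\omega$ — in particular for $\omega$ with all coordinates distinct — I get $\omega_h=\omega_{\sigma(h)}$ for all $h$ on a positive-measure set of $\omega$'s, which forces $\sigma(h)=h$ for every $h$. Hence $\phi$ preserves each copy.

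It remains to see that the restriction $\phi_h:=\phi|_{\mathcal{V}^{(h)}}$, transported to $\mathcal{H}$ via the identification $v\leftrightarrow(v,h)$, is an automorphism of $\mathcal{H}$ fixing $\{v_{-n},\ldots,v_n\}$. That $\phi_h\in Aut(\mathcal{H})$ follows because $\phi$, being a graph automorphism preserving copies, must send first-type edges within $\mathcal{V}^{(h)}$ to first-type edges within $\mathcal{V}^{(h)}$ — it cannot send them to second-type edges since those go between distinct copies. For the fixing condition, consider a second-type edge $\{(v_{-i},g),(v_i,gg_i)\}$: it is the unique edge of $\mathcal{E}_G$ joining $\mathcal{V}^{(g)}$ to $\mathcal{V}^{(gg_i)}$ with that orientation, so $\phi$ must map it to an edge joining $\mathcal{V}^{(g)}$ to $\mathcal{V}^{(gg_i)}$; here I would use the structure of $\mathcal{E}_G$ to argue the only such edges are the $\{(v_{-i},g),(v_i,gg_i)\}$ themselves (a point that should be checked carefully, possibly requiring a mild nondegeneracy so that inter-copy edges are not identified — in the worst case one handles multiplicities of the $g_i$'s directly), whence $\phi_g(v_{-i})=v_{-i}$ and $\phi_{gg_i}(v_i)=v_i$ for all $g$ and all $i$. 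Since every marked vertex $v_{\pm i}$ arises this way, each $\phi_h$ fixes all marked vertices, so $(\phi_h)_{h\in G}\in\prod_{g\in G}Aut(\mathcal{H}\mid\{v_{-n},\ldots,v_n\})$ and $\Theta((\phi_h)_h)=\phi$. This establishes surjectivity and completes the proof that $\Theta$ is a group isomorphism.
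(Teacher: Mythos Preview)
Your proposal is correct and follows essentially the same route as the paper: well-definedness, homomorphism, and injectivity are dispatched quickly, and surjectivity is obtained by first using the continuity of $\mu$ (so that the $\omega_g$ are a.s.\ pairwise distinct) to force any $\phi\in Aut_{And}(\mathcal{H}_G)$ to preserve each copy $\mathcal{V}^{(h)}$, and then using the inter-copy edges to pin down the marked vertices. The paper does the first step by the one-line computation $(\omega_g-\omega_{\pi(\psi((v,g)))})u((v,g))=0$ rather than your eigenspace phrasing, but the content is identical; and your explicit caveat about the possibility of several inter-copy edges between $\mathcal{V}^{(g)}$ and $\mathcal{V}^{(gg_i)}$ is a point the paper passes over silently.
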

\begin{proof}
The definition of $\Theta$ automatically implies that $\Theta((\phi_g)_{g\in G})$ is an element of $Aut_{And}(\mathcal{H}_G)$ for any  $(\phi_g)_{g\in G}\in\prod_{g\in G} Aut(\mathcal{H}|\{v_{-n},\ldots,v_{-1},v_1,\ldots,v_n\})$.
The mapping is a group homomorphism is also clear. We only need to show that it
is a bijection. Clearly, the map is an injection; so we only need to show that
it is a surjection.

Let $\psi\in Aut_{And}(\mathcal{H}_G)$, then for any $u:\mathcal{V}_G\rightarrow\CC$ with $\supp(u)\subset \mathcal{V}\times \{g\}$ for some $g\in G$, observe that 
\begin{align*}
0&=[(H^\omega_{\mathcal{H}_G}-U_\psi H^\omega_{\mathcal{H}_G} U_\psi^\ast)u]((v,g))\\
&=(\omega_g-\omega_{\pi(\psi((v,g)))}) u((v,g)),
\end{align*}
for any $v\in\mathcal{H}$, where $\pi:\mathcal{V}_G\rightarrow G$ is the map $\pi((v,h))= h$ for any $(v,h)\in\mathcal{V}_G$. So, we get
$$\pi(\psi((v,g)))=g\qquad\forall (v,g)\in\mathcal{V}_G,$$
which implies $\psi$ restricted to $\mathcal{V}\times\{g\}$ is a bijection and so is a graph isomorphism, for any $g\in G$.
Now let us focus on the edge $\{(v_{-i},g),(v_{i},gg_i)\}\in\mathcal{E}_G$. Note
that since $\psi$ is a graph automorphism, we have
$\{\psi((v_{-i},g)),\psi((v_i,gg_i))\}\in\mathcal{E}_G$ , which by the above argument implies $\psi((v_{j},h))=(v_{j},h)$ for any $h\in G$ and $i\in\{-n,\ldots,-1,1,\ldots,n\}$. 
This gives us the surjection, completing the proof.

\end{proof}
The above result provides all the unitary operators which fix the operator \eqref{eqErgAndOp} and arise from an automorphism of the graph $\mathcal{H}_G$.
The main reason to state the above theorem is because, now we can construct a graph $\mathcal{H}_G$ such that $Aut_{And}(\mathcal{H}_G)$ is trivial. 
We will focus on this feature after the following result.
%
%
%
%
In the following theorem, we will show that the operators of the form \eqref{eqErgAndOp} can have higher multiplicity for its pure point spectrum.

\begin{theorem}\label{thmGErg}
Consider a graph $\mathcal{H}$ such that for the adjacency matrix $\Delta_{\mathcal{H}}$, there exists $E_0\in\sigma(\Delta_{\mathcal{H}})$ of multiplicity at least $l\geq 2$.
Suppose there exist orthonormal eigenvectors $\psi_1,\ldots,\psi_l$ for $E_0$ and vertices $x_1,\ldots,x_m\in\mathcal{V}$, for some $m\geq 1$, satisfying
$$\psi_i(x_j)=0\qquad\forall 1\leq i \leq l, 1\leq j\leq m.$$
For any $\pi:\{-n,\ldots,n\}\rightarrow\{1,\ldots,m\}$, 
let $\mathcal{H}_G=(\mathcal{V}_G,\mathcal{E}_G)$ be the graph defined by definition \ref{defCay} using the graph $\mathcal{H}$ with $v_i=x_{\pi(i)}$ for $-n\leq i\leq n$
and the finitely generated group $G$ with generators $g_1,\ldots, g_n$.
Defining the operator $H^\omega_G$ by \eqref{eqErgAndOp}, where the random variables $\{\omega_g\}_{g\in G}$ are i.i.d real random variables following an absolutely continuous distribution $\mu$, we have
$$E_0+\supp(\mu)\subset \sigma_{pp}(H^\omega_G)\qquad a.s,$$
and the maximum multiplicity of point spectrum in $E_0+\supp(\mu)$ is at least $l$.
\end{theorem}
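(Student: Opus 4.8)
The plan is to mimic the proof of Theorem~\ref{thmCan1} almost verbatim, using the local copies of $\mathcal{H}$ sitting inside $\mathcal{H}_G$ as the places where we glue eigenvectors of $\Delta_{\mathcal{H}}$. Fix $g\in G$. The subgraph of $\mathcal{H}_G$ induced on $\mathcal{V}\times\{g\}$ is an isometric copy of $\mathcal{H}$, via the identification $v\leftrightarrow(v,g)$, and under $H^\omega_G$ this copy carries the operator $\Delta_{\mathcal{H}}+\omega_g I$ plus the ``boundary'' couplings that link $(v_{-i},g)$ to $(v_i,gg_i)$ and $(v_{-i},gg_i^{-1})$ to $(v_i,g)$. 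The key point is that those couplings only ever touch the vertices $v_{\pm i}=x_{\pi(\pm i)}$, all of which lie in $\{x_1,\dots,x_m\}$, and by hypothesis every $\psi_i$ vanishes on all of $x_1,\dots,x_m$.

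\medskip

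\noindent\textbf{Construction of the eigenvectors.} For each $g\in G$ and each $1\le i\le l$, define $\Psi^{(g,i)}\in\ell^2(\mathcal{H}_G)$ by $\Psi^{(g,i)}((v,h))=\psi_i(v)$ if $h=g$ and $0$ otherwise; equivalently $\Psi^{(g,i)}=U_g^{\ast}$ applied to the $g=e$ case, so it suffices to treat one fixed $g$. I claim $(H^\omega_G-(E_0+\omega_g))\Psi^{(g,i)}=0$. At a vertex $(v,h)$ with $h\neq g$ and $(v,h)$ not adjacent to any $(w,g)$, every term vanishes trivially. At a vertex $(v,g)$ that is \emph{not} one of the $(v_{\pm j},g)$, the only neighbours carrying nonzero $\Psi^{(g,i)}$-value are the within-copy neighbours $\{(w,g):\{v,w\}\in\mathcal{E}\}$, so $[\Delta_{\mathcal{H}_G}\Psi^{(g,i)}]((v,g))=[\Delta_{\mathcal{H}}\psi_i](v)=E_0\psi_i(v)$, and the diagonal term contributes $\omega_g\psi_i(v)$; the equation holds. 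At $(v,g)=(v_{\pm j},g)$ we get the same within-copy sum $[\Delta_{\mathcal{H}}\psi_i](v_{\pm j})=E_0\psi_i(v_{\pm j})=0$ \emph{plus} the extra coupling term, which equals $\Psi^{(g,i)}$ evaluated at the partner vertex $(v_j,gg_j)$ or $(v_{-j},gg_j^{-1})$ — but that partner lies in a copy $h\neq g$, so its $\Psi^{(g,i)}$-value is $0$; and $\psi_i(v_{\pm j})=0$ anyway, so both sides are $0$. Finally, at a neighbour $(w,h)$ with $h\neq g$ of some $(v_{\pm j},g)$, the incoming contribution is $\Psi^{(g,i)}((v_{\pm j},g))=\psi_i(v_{\pm j})=0$. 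Hence $\Psi^{(g,i)}$ is a genuine $\ell^2$ eigenvector with eigenvalue $E_0+\omega_g$.

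\medskip

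\noindent\textbf{Multiplicity and the spectral conclusion.} Since $\psi_1,\dots,\psi_l$ are orthonormal in $\ell^2(\mathcal{H})$ and the $\Psi^{(g,i)}$ are supported on the single slice $\mathcal{V}\times\{g\}$, we have $\dprod{\Psi^{(g,i)}}{\Psi^{(g,i')}}=\dprod{\psi_i}{\psi_{i'}}=\delta_{ii'}$, so $E_0+\omega_g$ is an eigenvalue of $H^\omega_G$ of multiplicity at least $l$, for every $g\in G$. Now $\{\omega_g\}_{g\in G}$ is i.i.d.\ with common distribution $\mu$, so almost surely $\overline{\{E_0+\omega_g:g\in G\}}=E_0+\supp(\mu)$; since eigenvalues of a self-adjoint operator form a subset of the spectrum that is dense in its closure, we get $E_0+\supp(\mu)\subset\sigma_{pp}(H^\omega_G)$ a.s. (absolute continuity of $\mu$ guarantees this set has positive Lebesgue measure, matching the earlier remark). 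Moreover, for the (a.s.\ attained) value $E_0+\omega_g$ the above gives $l$ orthonormal eigenvectors, so the maximal multiplicity of point spectrum inside $E_0+\supp(\mu)$ is at least $l$.

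\medskip

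\noindent I expect no serious obstacle: the whole argument is the Theorem~\ref{thmCan1} computation with ``root of the tree'' replaced by ``the vertices $x_1,\dots,x_m$ where all the chosen eigenvectors vanish,'' and the only thing one must check carefully is the evaluation of $(H^\omega_G-(E_0+\omega_g))\Psi^{(g,i)}$ at the boundary vertices $(v_{\pm j},g)$ and at their cross-copy neighbours — precisely the vertices involved in the extra edges of Definition~\ref{defCay} — which is exactly where the hypothesis $\psi_i(x_j)=0$ is used and makes those terms vanish. The mild point worth a sentence is that each $\Psi^{(g,i)}$ is genuinely in $\ell^2(\mathcal{H}_G)$ because $\mathcal{H}$ is finite, so its support is finite.
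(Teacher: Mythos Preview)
Your proposal is correct and follows essentially the same approach as the paper: define $\Psi^{(g,i)}$ by lifting $\psi_i$ to the slice $\mathcal{V}\times\{g\}$, verify the eigenvalue equation using $\psi_i(x_j)=0$ at the cross-copy edges, deduce orthonormality from that of the $\psi_i$, and close with the i.i.d.\ density argument. Your write-up is more detailed in the case analysis at the boundary vertices than the paper's, but the argument is the same.
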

\begin{proof}
The proof follows similar steps as the proof of Theorem \ref{thmCan1}.
Fix a $g\in G$ and define
$$\Psi^{g,i}((v,h))=\left\{\begin{matrix} \psi_i(v) & h=g \\ 0 & h\neq g \end{matrix}\right.\qquad\forall (v,h)\in\mathcal{V}_G,$$
then 
$$[(H^\omega_G-E_0-\omega_g)\Psi^{g,i}]((v,h))=0\qquad\forall h\neq g, v\in\mathcal{V}$$
holds trivially.
This is because, the only way a term like $\Psi^{g,i}((\cdot,g))$ can show up is through the adjacency operator $\Delta_{\mathcal{H}_G}$;
but then it will be $\Psi^{g,i}((x_j,g))$ for some $1\leq j\leq m$, which is zero.
For any $v\in\mathcal{V}$, we have
$$[(H^\omega_G-E_0-\omega_g)\Psi^{g,i}]((v,g))=[(\Delta_{\mathcal{H}} -E_0)\psi_i](v)=0;$$
hence $\{\Psi^{g,i}\}_{i=1}^l$ are eigenvectors of $H^\omega_G$ for the
eigenvalue $E_0+\omega_g$. They are orthonormal by the definition of $\{\psi_i\}_i$; 
hence the  multiplicity of eigenvalue $E_0+\omega_g$ for the operator $H^\omega_G$ is at least $l$. 

Following the above steps, we get that $\{\omega_g+E_0\}_{g\in G}$ are eigenvalues of $H^\omega_G$ with multiplicity at least $l$.
Since $\{\omega_g\}_{g\in G}$ are i.i.d random variables, we have
$$E_0+\supp(\mu)=\overline{\{E_0+\omega_g:g\in G\}}\subset \sigma_{pp}(H^\omega_G),$$
which completes the proof.

\end{proof}
There are many examples of graph $\mathcal{H}$ which satisfies the hypothesis of
the above theorem. The following examples illustrate a constructive mechanism to
create  these types of graphs.
\begin{example}
Here we are constructing a graph $\mathcal{H}=(\mathcal{V},\mathcal{E})$ such that, there exists an eigenvalue $E_0$ for the adjacency matrix $\Delta_{\mathcal{H}}$ 
with multiplicity at least $l$, which satisfies
$$\psi_i(x_j)=0\qquad\forall 1\leq i \leq l, 1\leq j\leq m,$$
where $\{\psi_i\}_{i=1}^l$ are some orthonormal eigenvectors corresponding to
the eigenvalues $E_0$ and $x_1,\ldots,x_m\in\mathcal{V}$.

Given a sequence of finite undirected graphs $\tilde{\mathcal{H}}_i=(\tilde{\mathcal{V}}_i,\tilde{\mathcal{E}}_i)$ for $1\leq i\leq l+m$ such that 
$$E_0\in \bigcap_{i=1}^{l+m} \sigma(\Delta_{\tilde{\mathcal{H}}_i}),$$
let $\{v_{i,j}\}_{j=1}^m$ be in $\tilde{\mathcal{V}}_i$ for each $i$ (we are allowing the case $v_{i,j}=v_{i,k}$ for some $j\neq k$).
For the graph $\mathcal{H}$, define the vertex set to be
$$\mathcal{V}=\{x_j:1\leq j\leq m\}\cup \bigcup_{i=1}^{l+m} \tilde{\mathcal{V}}_i,$$
where $\{x_j\}_j$ are new vertices, and the edge set is defined by
$$\mathcal{E}=\bigcup_{i=1}^{l+m}\left(\tilde{\mathcal{E}}_i\cup\{\{x_j,v_{i,j}\}:1\leq j\leq m\} \right).$$
Now, let $\psi_i$ denote an eigenvector for the eigenvalue $E_0$ for the adjacency matrix $\Delta_{\tilde{\mathcal{H}}_i}$ and define
$$\Psi^{(\alpha)}(w)=\left\{\begin{matrix} \alpha_i\psi_i(w) &
    w\in\tilde{\mathcal{V}}_i~\text{for some }i \\ 0 & w=x_i~\text{ for some }i\end{matrix}\right.\qquad\forall w\in\mathcal{V},$$
where $\{\alpha_i\}_{i=1}^{l+m}$ satisfies
\begin{equation}\label{exRelEq1}
 \sum_{i=1}^{l+m} \alpha_i \psi_i(v_{i,j})=0\qquad\forall 1\leq j\leq m,
\end{equation}
and $\sum_i |\alpha_i|^2=1$. With this definition note that if $w\in\tilde{\mathcal{V}}_i$ for any $i$, then
$$[(\Delta_{\mathcal{H}}-E_0)\Psi^{(\alpha)}](w)=\alpha_i[(\Delta_{\tilde{\mathcal{H}}_i}-E_0)\psi_i](w)=0.$$
And for $w\in\{x_1,\ldots,x_m\}$, we get
$$[(\Delta_{\mathcal{H}}-E_0)\Psi^{(\alpha)}](w)=\sum_{i=1}^{l+m} \alpha_i \psi_i(v_{i,j})=0\qquad\exists 1\leq j\leq m,$$
because of \eqref{exRelEq1}. 
Viewing equations \eqref{exRelEq1} as matrix equation we get that there are at least $l$ orthonormal $(\alpha)$ which satisfy the equations.
Hence, we get all the properties that we desire for $\mathcal{H}$.
\end{example}
In particular, we can look at a special case of a graph $\mathcal{H}$ for which the only automorphism which fixes $x_i$ for each $i$ is identity.
\begin{example}
In the earlier example take the graph $\tilde{\mathcal{H}}_i$ to be
$$\tilde{\mathcal{V}}_i=\{n:1\leq n\leq 2p_i-1\}~~\&~~\tilde{\mathcal{E}}_i=\{\{n,n+1\}:1\leq n<2p_i-1\},$$
where $p_i$ is the $i^{th}$ prime starting from $2$.

\begin{figure}[ht]
\begin{center}
\begin{tikzpicture}
\fill(0,0)circle(2pt)node[left]{$x_{1}$};
\fill(4,0)circle(2pt)node[right]{$x_{2}$};

\filldraw(0,0)--(1,0)circle(2pt)--(2,0)circle(2pt)--(3,0)circle(2pt)--(4,0);
\filldraw(0,0)--(0.666,0.5)circle(2pt)--(1.333,0.85)circle(2pt)--(2,1)circle(2pt)--(2.666,0.85)circle(2pt)--(3.333,0.5)circle(2pt)--(4,0);
\filldraw(0,0)--(0.4,0.65)circle(2pt)--(0.8,1.3)circle(2pt)--(1.2,1.65)circle(2pt)--(1.6,1.9)circle(2pt)--
(2.0,2.0)circle(2pt)--(2.4,1.9)circle(2pt)--(2.8,1.65)circle(2pt)--(3.2,1.3)circle(2pt)--(3.6,0.65)circle(2pt)--(4.0,0.0);
\filldraw(0,0)--(0.285,0.796)circle(2pt)--(0.571,1.469)circle(2pt)--(0.857,2.02)circle(2pt)--(1.151,2.449)circle(2pt)--(1.423,2.755)circle(2pt)--
(1.714,2.938)circle(2pt)--(2.0,3.0)circle(2pt)--(2.285,2.938)circle(2pt)--(2.571,2.755)circle(2pt)--
(2.857,2.449)circle(2pt)--(3.151,2.02)circle(2pt)--(3.423,1.469)circle(2pt)--(3.714,0.796)circle(2pt)--(4.0,0.0)circle(2pt);

\draw(2,0)node[below]{$\mathcal{V}_1$};
\draw(2,1)node[below]{$\mathcal{V}_2$};
\draw(2,2)node[below]{$\mathcal{V}_3$};
\draw(2,3)node[below]{$\mathcal{V}_4$};
\end{tikzpicture}
 \caption{Example of $\mathcal{H}$ for $K=2$.} 
\end{center}
\end{figure}
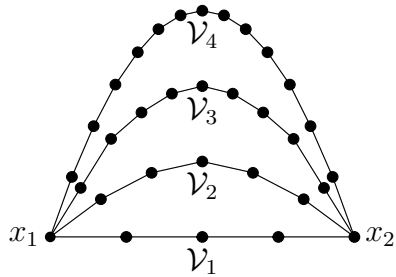
\noindent It is easy to see that 
$$\sigma(\Delta_{\tilde{\mathcal{H}}_i})=\left\{2\cos\frac{\pi j}{2p_i}:1\leq j\leq 2p_i-1\right\},$$
so $\{0\}= \cap_{i=1}^{K+2}\sigma(\Delta_{\tilde{\mathcal{H}}_i})$.
By the construction, it should be clear that any automorphism of $\mathcal{H}$ which fixes $x_1$ and $x_{2}$ is identity. 
Hence, as a consequence of Theorem \ref{thmAut} we get that $Aut_{And}(\mathcal{H}_G)$ is singleton group.
But using this graph in Theorem \ref{thmGErg}, we obtain that the operator $H^\omega_G$ has non-trivial multiplicity.
Hence we conclude that the multiplicity of $H^\omega_{\mathcal{H}_G}$ is not arising from any automorphisms of $\mathcal{H}_G$.

\end{example}
It should be noted that, the above example is nothing special and one can come up with more examples of similar type.
One could have started with $\tilde{\mathcal{V}}_i=\{n:1\leq n\leq 3p_i-1\}$, 
then $\cap_{i=1}^K \sigma(\Delta_{\tilde{\mathcal{H}}_i})=\left\{2\cos \frac{\pi}{3},2\cos \frac{2\pi}{3}\right\}$. 

\bibliographystyle{plain}

\begin{thebibliography}{10}

\bibitem{PA}
Philip~W Anderson.
\newblock Absence of diffusion in certain random lattices.
\newblock {\em Physical review}, 109(5):1492, 1958.

\bibitem{AD1}
M~Anish and Dhriti~Ranjan Dolai.
\newblock Multiplicity theorem of singular spectrum for general anderson type
  hamiltonian.
\newblock {\em arXiv:1709.01774}, 2017.

\bibitem{CL}
Ren{\'e} Carmona and Jean Lacroix.
\newblock {\em Spectral theory of random Schr{\"o}dinger operators}.
\newblock Springer Science \& Business Media, 2012.

\bibitem{DE}
Adrian Dietlein and Alexander Elgart.
\newblock Level spacing for continuum random schr\"{o}dinger operators with
  applications.
\newblock {\em arXiv preprint arXiv:1712.03925}, 2017.

\bibitem{PH}
Peter~D Hislop.
\newblock Lectures on random schr{\"o}dinger operators.
\newblock {\em Fourth Summer School in Analysis and Mathematical Physics},
  476:41--131, 2008.

\bibitem{HK1}
Peter~D Hislop and M~Krishna.
\newblock Eigenvalue statistics for random schr{\"o}dinger operators with non
  rank one perturbations.
\newblock {\em Communications in Mathematical Physics}, 340(1):125--143, 2015.

\bibitem{JL1}
Vojkan Jak\v{s}i\'{c} and Yoram Last.
\newblock Spectral structure of anderson type hamiltonians.
\newblock {\em Inventiones mathematicae}, 141(3):561--577, 2000.

\bibitem{JL2}
Vojkan Jak\v{s}i\'{c} and Yoram Last.
\newblock Simplicity of singular spectrum in anderson-type hamiltonians.
\newblock {\em Duke Mathematical Journal}, 133(1):185--204, 05 2006.

\bibitem{K2}
Werner Kirsch.
\newblock An invitation to random schr{\"o}dinger operators. with an appendix
  by fr{\'e}d{\'e}ric klopp.
\newblock {\em Panor. Synth\'{e}ses. Random Schr\"{o}dinger operators. Soc.
  Math. France, Paris}, pages 1--119, 2008.

\bibitem{KM2}
Abel Klein and Stanislav Molchanov.
\newblock Simplicity of eigenvalues in the anderson model.
\newblock {\em Journal of statistical physics}, 122(1):95--99, 2006.

\bibitem{AM1}
Anish Mallick.
\newblock Jak{\v{s}}i{\'c}-last theorem for higher rank perturbations.
\newblock {\em Mathematische Nachrichten}, 289(11-12):1548--1559, 2016.

\bibitem{AM3}
Anish Mallick.
\newblock Multiplicity bound of singular spectrum for higher rank anderson
  models.
\newblock {\em Journal of Functional Analysis}, 272(12):5162 -- 5190, 2017.

\bibitem{NNS}
Sergey Naboko, Roger Nichols, and G\"{u}nter Stolz.
\newblock Simplicity of eigenvalues in anderson-type models.
\newblock {\em Arkiv f\"{o}r Matematik}, 51(1):157--183, 2013.

\bibitem{npa}
P~A Narayanan.
\newblock Eigenvalue statistics for higher rank anderson model over canopy
  tree.
\newblock {\em arXiv preprint arXiv:1706.02488}, 2017.

\bibitem{SSH}
Christian Sadel and Hermann Schulz-Baldes.
\newblock Random dirac operators with time reversal symmetry.
\newblock {\em Communications in Mathematical Physics}, 295(1):209--242, 2010.

\bibitem{BS2}
Barry Simon.
\newblock Cyclic vectors in the anderson model.
\newblock {\em Reviews in Mathematical Physics}, 6(05a):1183--1185, 1994.

\end{thebibliography}

\end{document}